\newcommand{\ol}[1]{{\overline{#1}}}
\newcommand{\bigpar}{\par\quad\newline\noindent}
\newcommand{\dint}[1]{\,\mathrm{d}#1}
\newcommand{\iu}{\mathtt{i}}
\newcommand{\fspace}[1]{{\mathsf{#1}}}
\newcommand{\fspaceL}{\fspace{L}}
\newcommand{\fspaceH}{\fspace{H}}
\newcommand{\Rset}{{\mathbb{R}}}
\newcommand{\Zset}{{\mathbb{Z}}}
\newcommand{\cointerval}[2]{[#1,\,#2)}%
\newcommand{\ccinterval}[2]{[#1,\,#2]}%
\newcommand{\DO}[1]{{O\at{#1}}}
\newcommand{\pair}[2]{{\left({#1},\,{#2}\right)}}
\newcommand{\bpair}[2]{{\big({#1},\,{#2}\big)}}
\newcommand{\at}[1]{{\left({#1}\right)}}
\newcommand{\nat}[1]{(#1)}
\newcommand{\bat}[1]{{\big(#1\big)}}
\newcommand{\Bat}[1]{{\Big(#1\Big)}}
\newcommand{\triple}[3]{{\left({#1},\,{#2},\,{#3}\right)}}
\newcommand{\norm}[1]{\|{#1}\|}
\newcommand{\bnorm}[1]{\big\|{#1}\big\|}
\newcommand{\abs}[1]{\left|{#1}\right|}
\newcommand{\babs}[1]{\big|{#1}\big|}
\newcommand{\eps}{{\varepsilon}}
\newcommand{\ka}{{\kappa}}
\newcommand{\si}{{\sigma}}
\newtheorem{theorem}{Theorem}[]
\newtheorem{lemma}         [theorem]{Lemma}
\newtheorem{proposition}   [theorem]{Proposition}
\newtheorem*{result*}{Main results}
\newtheorem*{problem*}{Open problems}
\begin{document}

\title{Uniqueness of solitary waves
in the high-energy limit of  FPU-type chains}
\date{\today}
\author{Michael Herrmann}
  \address{Institut f\"ur Numerische und Angewandte Mathematik,
     Universit\"at M\"unster}
     \email{michael.herrmann@uni-muenster.de}
     
  \author{Karsten Matthies}
\address{Department of Mathematical Sciences, University of Bath} \email{k.matthies@bath.ac.uk}

\begin{abstract}Recent asymptotic results in \cite{HM15} provided detailed information   on the shape of    solitary high-energy travelling waves   in   FPU atomic chains. In this note we use and extend
the methods to understand the linearisation of the travelling wave equation. We show that there are not any other zero eigenvalues than those created by the translation symmetry   and this implies a local uniqueness result. The key \mbox{argument} in our asymptotic analysis is to replace the linear advance-delay-differential equation for the eigenfunctions by an approximate ODE.\end{abstract}

\maketitle

\section{Introduction}

We study an aspect of coherent motion within  a spatially one-dimensional lattice with nearest-neighbor interactions in the form of Fermi-Pasta-Ulam or FPU-type chains given by
\begin{align}
\label{Eqn:FPU}
\ddot u_j\at{t}= \Phi^\prime\bat{u_{j+1}\at{t}-u_{j}\at{t}}-
\Phi^\prime\bat{u_{j}\at{t}-u_{j-1}\at{t}}, \qquad j \in \Zset.
\end{align}
We are interested in  solitary travelling waves, which are solutions of \eqref{Eqn:FPU},
given for
 positive wave-speed parameter $\si$ by a distance profile $R$ and a velocity profile $V$ such that
\begin{align}
\label{Eqn:TW.Diff}
R^\prime\at{x}
=
V\at{x+1/2}-
V\at{x-1/2}\,,\quad
\si \, V^\prime\at{x}
=
\Phi^\prime\bat{R\at{x+1/2}}-
\Phi^\prime\bat{R\at{x-1/2}}
\end{align}
is satisfied for all $x\in\Rset$. The scalar function $\Phi$ is the nonlinear interaction potential and the position $u_j\at{t}$ of particle $j$ can be obtained by $u_j\at{t}=U\at{j-\sqrt{\si}\,t}$,   where $U$ denotes the primitive of $V$.
\par
  In the literature there exist many results on the existence of different types of travelling waves -- see for instance \cite{FW96,FV99,Pan05,SZ09,IJ05} -- but almost nothing is known about the uniqueness for fixed wave-speed or
their dynamical stability with respect to \eqref{Eqn:FPU}.
The only exceptions are the completely integrable Toda chain (see \cite{Tes01} for an overview) and the KdV limit of near-sonic waves with small energy which have been studied rigorously in \cite{FP99,FP02,FP3,FP4}.
\par
Another asymptotic regime is related to high-energy waves in chains with rapidly increasing or singular potential; we refer to
\cite{FM02,Tre04,Her10,H17} for FPU-type chains and to \cite{FSD12,TV14,AKJSG15} for similar solutions in other
models. In \cite{HM15} the authors   provide a detailed asymptotic analysis for the high-energy limit for potentials with
sufficiently strong singularity and   derive
explicit leading \mbox{order} formula for $\si$  as well as the next-to-leading \mbox{order} corrections to the \mbox{asymptotic} \mbox{profile} functions.
In this note we   apply similiar techniques to the linearisation of (\ref{Eqn:TW.Diff}) and sketch how the local uniqueness of  solitary high-energy waves can be estab\-lished by   an implicit function argument.
In the final section \ref{sect:dis}, we set the results into the wider context of stable coherent motion for FPU lattices.

%
%
\section{The high-energy limit for singular potentials}
%
%
%
  As in \cite{HM15}   we restrict our considerations to the example potential
\begin{align}
\label{Eqn:Pot}
\Phi\at{r} = \frac{1}{m\at{m+1}}\at{\frac{1}{\at{1-r}^m}-m\,r-1}\qquad \text{with}\quad m\in\Rset\quad\text{and}\quad m>1\,,
\end{align}
which satisfies $\Phi\at{0}=\Phi^\prime\at{0}=0$ and $\Phi^{\prime\prime}\at{0}=1$.   This potential is convex, well-defined for $r\leq1$, and singular as $r\nearrow1$. Moreover, it resembles -- up to a reflection in $r$  -- the classical Lennart-Jones potential, for which the analysis holds with minor modifications.
\par
  The subsequent analysis concerns a special family of solitary waves that has been introduced in \cite{HM15}; similar families have been constructed in \cite{FM02,Tre04,Her10}.
\begin{proposition}[family of solitary waves and its high-energy limit]
\label{Ass:Waves}
There exists a family of solitary waves $\bat{\triple{V_\delta}{R_\delta}{\si_\delta}}_{0<\delta<1}$  with the following properties:
\begin{enumerate}
\item
$V_\delta$ and $R_\delta$ belong to $\fspaceL^2\at\Rset\cap\fspace{BC}^1\at\Rset$ and are nonnegative and even. They are also unimodal, i.e. increasing and decreasing for $x<0$ and $x>0$, respectively.
\item
$V_\delta$ is
normalized by $\norm{V_\delta}_2=1-\delta$ and
$R_\delta$ takes values in $\cointerval{0}{1}$.
\end{enumerate}
Moreover, the potential energy explodes in the sense of
$p_\delta:=\int_\Rset \Phi\bat{R_\delta\at{x}}\dint{x}\to \infty$ as $\delta\to0$.
\end{proposition}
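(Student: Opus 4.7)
The plan is to recast the travelling wave system \eqref{Eqn:TW.Diff} as a constrained variational problem. Define the averaging operator $\bat{\calA V}\at{x} := \int_{x-1/2}^{x+1/2} V\at{y}\dint{y}$. The first equation in \eqref{Eqn:TW.Diff}, combined with decay at infinity, is equivalent to $R = \calA V$, and integrating the second yields $\si V = \calA\bat{\Phi^\prime\at{R}}$. Since $\calA$ is self-adjoint on $\fspaceL^2\at\Rset$, this pair is precisely the Euler--Lagrange system for maximising the potential-energy functional $\calJ\at{V} := \int_\Rset \Phi\bat{\calA V\at{x}}\dint{x}$ on the constraint set $\bigl\{V\in\fspaceL^2\at\Rset : \|V\|_2 = 1-\de,\ V\geq 0,\ \calA V<1\text{ a.e.}\bigr\}$, with $\si_\de$ appearing as the Lagrange multiplier.

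For existence I would apply symmetric decreasing rearrangement to any maximising sequence: because $\calA$ satisfies a Riesz-type rearrangement inequality and $\Phi$ is increasing on $\cointerval{0}{1}$, this substitution does not decrease $\calJ$ while preserving the $\fspaceL^2$ norm. The sequence may therefore be taken to consist of even, nonnegative, unimodal functions, which precludes both vanishing and splitting in the sense of concentration-compactness, and a subsequential $\fspaceL^2_{\mathrm{loc}}$ limit yields a maximiser $V_\de$ with the qualitative properties in (i)--(ii). Regularity follows from the smoothing action of $\calA$, which maps $\fspaceL^2\at\Rset$ continuously into $\fspaceH^1\at\Rset\cap\fspace{BC}\at\Rset$: hence $R_\de\in\fspace{BC}^1\at\Rset$, and a bootstrap in the Euler--Lagrange equation upgrades $V_\de$ to the same regularity, while positivity of $\si_\de$ can be read directly off $\si_\de V_\de=\calA\bat{\Phi^\prime\at{R_\de}}$.

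To establish $p_\de\to\infty$ I would exhibit explicit admissible test functions whose potential energy diverges. A natural choice is the rectangular profile $V_*^{\at{c}}:= c\,\mathbf{1}_{\ccinterval{-1/2}{1/2}}$ with $c=1-\de$, which satisfies $\|V_*^{\at{c}}\|_2=c$ and produces a tent-shaped $\calA V_*^{\at{c}}$ with maximum value $c$ at the origin. As $\de\to 0$ this distance profile approaches the singularity of $\Phi$ linearly near $x=0$, and because $m>1$ the integral $\int_\Rset \Phi\bat{\calA V_*^{\at{c}}\at{x}}\dint{x}$ diverges; since $V_\de$ maximises $\calJ$ one concludes $p_\de \geq \calJ\bat{V_*^{\at{1-\de}}}\to\infty$. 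The main obstacle is that the constraint surface is not weakly compact in $\fspaceL^2\at\Rset$, so rearrangement and unimodality are indispensable for the existence step; a further delicate point is the quantitative coupling between the height and the width of the unimodal profile as $R_\de\nearrow 1$, whose sharp control requires the asymptotic analysis carried out in \cite{HM15} and which the present note will then be free to use as a black box.
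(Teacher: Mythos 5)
The paper does not actually prove Proposition~\ref{Ass:Waves}: it imports the family of waves from \cite{HM15}, which in turn builds on the constrained-optimisation constructions of \cite{FW96,Her10}. Your sketch reconstructs essentially that construction -- maximising $\calJ\at{V}=\int_\Rset\Phi\bat{\calA V}\dint{x}$ over the $\fspaceL^2$-sphere $\norm{V}_2=1-\delta$, with $\si_\delta$ as Lagrange multiplier -- and the pieces that can be checked directly are right: the reformulation $R=\calA V$, $\si V=\calA\bat{\Phi'\at{R}}$ is correct; the Cauchy--Schwarz bound $\calA V\leq\norm{V}_2=1-\delta<1$ makes the constraint set admissible and $\calJ$ finite; and the tent test function gives $\calJ\bat{V_*^{(1-\delta)}}\gtrsim\int_{-1}^{1}\bat{\delta+(1-\delta)\abs{x}}^{-m}\dint{x}\sim\delta^{1-m}\to\infty$ precisely because $m>1$, which yields the blow-up of $p_\delta$. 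The one claim that is not correct as stated is that evenness, nonnegativity and unimodality of the maximising sequence ``precludes both vanishing and splitting'': an even unimodal sequence such as $\bat{1-\delta}n^{-1/2}\mathbf{1}_{[-n/2,n/2]}$ satisfies all these constraints yet spreads out with $\calJ\to0$, so unimodality alone does not rule out vanishing. What closes this gap in \cite{FW96,Her10} is the superquadratic growth of $\Phi$ (equivalently, strict subadditivity of the maximal energy as a function of the constraint level), which makes concentration strictly favourable and forces the local limit of a maximising sequence to carry the full norm; you should add this ingredient explicitly. With that repair, and with the quantitative width--height asymptotics deferred to \cite{HM15} as you propose, the argument is the standard one.
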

The asymptotic results from \cite{HM15} can be summarized as follows, where the small quantities
\begin{align*}
\eps_\delta := 1-R_\delta\at{0}  \qquad\text{and}\qquad
\mu_\delta:=\sqrt{\si_\delta\,\eps_\delta^{m+2}}
\end{align*}
  measure the inverse impact of the singularity   and
determine   the length scale for the leading order corrections to the asymptotic profile functions,   respectively.
\par
\begin{proposition}[global approximation in the high-energy limit]
\notag
The formulas
\begin{equation*}
\hat{R}_\eps\at{x}:=\left\{
\begin{array}{lcl}
1-\eps-\eps\,\bar{S}\at{\displaystyle\frac{\abs{x}}{\hat{\mu}_\eps}}&&\mbox{for $0\leq\abs{x}<\frac12$}\\
\eps\,\bar{T}\at{\displaystyle\frac{1-\abs{x}}{\hat{\mu}_\eps }}&&\mbox{for $\frac12\leq\abs{x}<\frac32$}\\
0&&\mbox{else}
\end{array}\right.\
\end{equation*}
and
\begin{equation*}
\hat{V}_\eps\at{x}:=\frac{\eps}{\hat{\mu}_\eps}\left\{
\begin{array}{lcl}
\bar{W}\at{\displaystyle\frac{\frac12-\abs{x}}{\hat{\mu}_\eps }}&&\mbox{for $0\leq\abs{x}<1$}\\
0&&\mbox{else}
\end{array}\right.
\end{equation*}
with
\begin{equation*}
\hat{\mu}_\eps := \frac{\ol{\mu}\,\eps}{1+\eps\,\at{\ol\ka-1}}\,\qquad \hat{\si}_\eps := \eps^{-m-2}\,\hat{\mu}_\eps^2
\end{equation*}
approximate the solitary waves from Proposition \ref{Ass:Waves} in the sense of
\begin{equation*}
\bnorm{R_\delta-\hat{R}_{\eps_\delta}}_q +\bnorm{V_\delta-\hat{V}_{\eps_\delta}}_q  + \eps_\delta^{-1}\babs{\mu_{\delta}-\hat{\mu}_{\eps_\delta}}+\eps_\delta^m \babs{\si_{\delta}-\hat{\si}_{\eps_\delta}}=\DO{\eps_\delta^m}=\DO{\delta^m}
\end{equation*}
for any $q\in\ccinterval{1}{\infty}$. Here,
$\bar{S}$ solves the ODE initial-value problem
\begin{equation}
\label{Eqn:LimitIVP}
\bar{S}^{\prime\prime}\at{\bar{x}}=
\frac{2}{m+1}\cdot\frac{1}{\at{1+\bar{S}\at{\bar{x}}}^{m+1}}\,,\qquad \bar{S}\at{0}=\bar{S}^\prime\at{0}=0
\end{equation}
and we have $\ol{\mu}:=\frac{2}{\sqrt{m\at{m+1}}}$ and $\ol{\ka}:=\int_{0}^\infty \tilde{x}\,\bar{S}^{\prime\prime}\at{\bar{x}}\dint{\bar{x}}$ as well as
$\bar{W}\at{\bar{x}}:=\mbox{$\frac12$}\at{\bar{S}^\prime\at{\bar{x}}+\ol{\mu}}$ and $\bar{T}\at{\bar{x}}:=\mbox{$\frac12$}\at{\bar{S}\at{\bar{x}}+\ol{\mu}\,\bar{x}+\ol{\ka}}$.
\end{proposition}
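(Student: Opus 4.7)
The plan is to establish this global approximation by a careful rescaling and matched-asymptotics argument around the three characteristic scales of the wave: the inner singular region near $x=0$, the transition layers near $x=\pm\tfrac12$ where $V_\delta$ peaks, and the shoulders near $x=\pm 1$ where $R_\delta$ drops to $0$. The starting point is to read off from Proposition \ref{Ass:Waves} that $R_\delta$ attains its maximum $1-\eps_\delta$ at $x=0$, where the singular potential $\Phi'(r)\sim (1-r)^{-m-1}/(m+1)$ dominates. Writing $R_\delta(x)=1-\eps_\delta-\eps_\delta\bar{S}_\delta(|x|/\mu_\delta)$ on a neighborhood of $0$ and plugging into the travelling wave equation \eqref{Eqn:TW.Diff}, one checks that the scaling $\mu_\delta=\sqrt{\si_\delta\eps_\delta^{m+2}}$ is precisely what balances the inertial term $\si_\delta V_\delta'$ against the rescaled nonlinear force, so that $\bar{S}_\delta$ formally converges to a solution $\bar{S}$ of the limit ODE \eqref{Eqn:LimitIVP}. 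The initial data $\bar{S}(0)=\bar{S}'(0)=0$ reflect unimodality and evenness of $R_\delta$.

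Next, I would use the first identity in \eqref{Eqn:TW.Diff} to propagate the inner approximation of $R_\delta$ to an approximation of $V_\delta$ on the shifted interval centred at $\pm\tfrac12$. Since $R_\delta'(x) = V_\delta(x+\tfrac12)-V_\delta(x-\tfrac12)$ and $V_\delta$ has (by Proposition \ref{Ass:Waves}) support effectively in $[-1,1]$, on $|x-\tfrac12|\ll 1$ only the term $V_\delta(x+\tfrac12)$ contributes to leading order, which after rescaling gives $V_\delta(\tfrac12-\mu_\delta\bar{x})\approx (\eps_\delta/\mu_\delta)\cdot\tfrac12\bar{S}'(\bar{x})$ plus a constant of integration fixed by matching with the outer region. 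That constant produces exactly the shift $\bar\mu/2$ and the profile $\bar{W}$. Iterating the same procedure on the next interval, i.e.\ using the second equation of \eqref{Eqn:TW.Diff} to go from $\bar{W}$ back to $R_\delta$ on the shoulder near $x=\pm 1$, one obtains $\bar{T}(\bar{x})=\tfrac12(\bar{S}(\bar{x})+\bar\mu\,\bar{x}+\bar\kappa)$ as the unique antiderivative compatible with the decay of $R_\delta$ at infinity, and the matching constant $\bar\kappa=\int_0^\infty \bar{x}\,\bar{S}''(\bar{x})\dint{\bar{x}}$ is forced by consistency of the inner and outer expansions.

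With the profiles $\hat{R}_\eps,\hat{V}_\eps$ in hand, the scaling law $\hat{\si}_\eps=\eps^{-m-2}\hat\mu_\eps^2$ follows from rewriting the definition of $\mu_\delta$, and the specific form of $\hat\mu_\eps$ is the unique choice that makes $\hat{V}_\eps$ satisfy the normalization $\|\hat{V}_\eps\|_2=1-\delta+\DO{\eps^m}$ consistent with the chosen parametrization of the family. To upgrade these formal matched expansions to the asserted $L^q$ and scalar error bounds $\DO{\eps_\delta^m}$, I would set up a fixed-point scheme for the perturbation $\bar{S}_\delta-\bar{S}$ in the inner variable: rewriting the nonlocal advance-delay equation as a perturbed version of \eqref{Eqn:LimitIVP} plus correction terms, one bounds the correction terms by $\DO{\eps_\delta}$ in a suitable weighted space exploiting the strict convexity and $C^2$-smoothness of the limit vector field around $\bar{S}$, and iterates. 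The bound in $L^1$ and $L^\infty$ then follows by unscaling, and the intermediate $q$ by interpolation. Since the scalar $\si_\delta$ and $\mu_\delta$ enter only through the balance law $\mu_\delta^2=\si_\delta\eps_\delta^{m+2}$, the asserted errors on those quantities reduce to the same inner estimate.

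The main obstacle is the last step: the travelling-wave equation is nonlocal, so the correction terms in the fixed-point scheme are not small pointwise but only after integration against the rapidly decaying derivative of $\bar{S}$. Making this work requires a careful choice of function space (weighted $L^2$ or exponentially decaying $C^0$) in which both the inner ODE is invertible on the complement of the translation mode and the delay operators $V\mapsto V(\cdot\pm\tfrac12)-V(\cdot\mp\tfrac12)$ are bounded; closing the bootstrap on the order $\DO{\eps_\delta^m}$ (as opposed to the cruder $\DO{\eps_\delta}$) requires identifying the next-to-leading order of each matching constant, which is precisely where the exponent $m$ -- the strength of the singularity -- reappears in the residual.
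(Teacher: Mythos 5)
The paper does not prove this proposition: it is introduced with ``The asymptotic results from \cite{HM15} can be summarized as follows'' and is imported wholesale from that reference, so there is no internal argument to compare yours against. Taken on its own terms, your sketch correctly reconstructs the \emph{formal} matched-asymptotics skeleton behind the formulas: the balance $\mu_\delta^2=\si_\delta\,\eps_\delta^{m+2}$ that turns the second-order form of \eqref{Eqn:TW.Diff} near the tip into the limit ODE \eqref{Eqn:LimitIVP}, the propagation through the two first-order equations to $\bar{W}=\tfrac12(\bar{S}'+\ol{\mu})$ and $\bar{T}$, and the identification of $\ol{\mu}=\bar{S}'(+\infty)$ (via the first integral of \eqref{Eqn:LimitIVP}) and of $\ol{\ka}$ as the asymptotic affine offset. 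This part is sound and matches the structure of the stated result.

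The genuine gap is in the only step that carries quantitative content, namely the $\DO{\eps_\delta^m}$ error bounds. You propose to obtain them by a fixed-point scheme for the correction $\bar{S}_\delta-\bar{S}$, requiring the linearised inner problem to be invertible ``on the complement of the translation mode''. Two problems arise. First, the waves of Proposition \ref{Ass:Waves} are \emph{given} (by variational or topological constructions), not built perturbatively; a contraction around $\hat{R}_\eps$ would at best produce \emph{some} nearby solution, and concluding that it coincides with $R_\delta$ requires exactly the local uniqueness and kernel control (Proposition \ref{prop:ker}) that the present note is written to establish and that \cite{HM15} did not have --- so in the context of this pair of papers your closure is circular. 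Second, invertibility of the linearised ODE \eqref{eqn:linODE} modulo translations is not enough: the correction solves a linearised \emph{advance-delay} equation, and controlling its inverse on a suitable weighted space is precisely the hard Fredholm/essential-spectrum analysis of Section~3, not a standard ODE fact. The actual route in \cite{HM15} is non-perturbative: direct a priori estimates on the given waves using evenness, unimodality, the normalisation, and integral identities of \eqref{Eqn:TW.Diff}. Finally, two smaller omissions: the claim that $\hat{\mu}_\eps$ is ``the unique choice'' enforced by the $\fspaceL^2$ normalisation is asserted rather than derived, and the $\fspaceL^q$ bounds for all $q\in\ccinterval{1}{\infty}$ require endpoint $\fspaceL^1$ and $\fspaceL^\infty$ control on \emph{all} regions simultaneously, including the decay of $R_\delta$ on $\abs{x}\geq\tfrac32$ where $\hat{R}_\eps$ vanishes identically; your sketch does not address that far-field estimate.
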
 %
\bigpar
\begin{figure}[t!] %
\centering{ %
\includegraphics[width=0.98\textwidth]{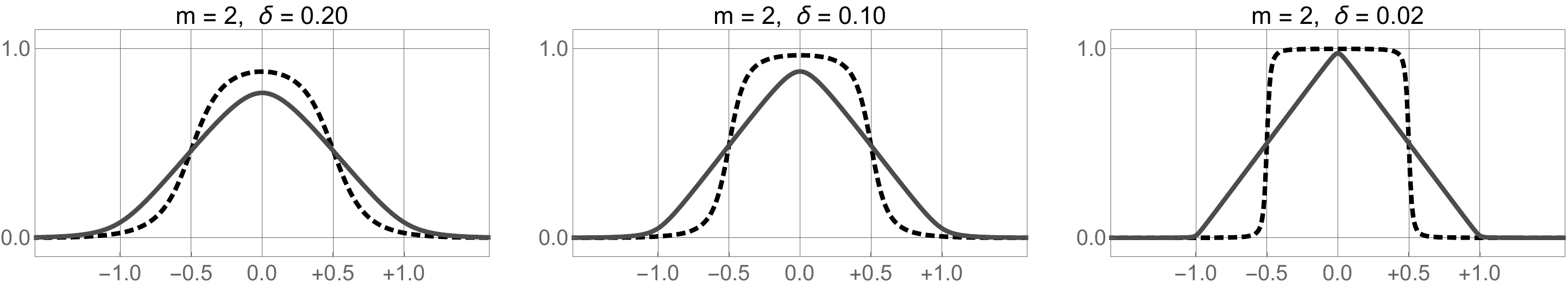} %
} %
\caption{Numerical results for high energy waves with $m=2$: The velocity profile $V_\delta$ (black, dashed) approaches as $\delta\to0$ the indicator function $V_0$ while the distance profiles $R_\delta$ (gray, solid) converges to the tent map $R_0$.%
} \label{FigNum} 
\end{figure} %
In this note we   establish   a local uniqueness result for the solitary waves from Proposition~\ref{Ass:Waves}.
\begin{theorem}  \label{ass:basic}
Suppose that $\delta_0>0$ is sufficiently small, then the solitary waves $R_\delta,V_\delta$ for given $\sigma_\delta$ are
locally unique for $0<\delta<\delta_0$. More precisely, there exists $c_0>0$ such that there are no other non-negative, even, and unimodal solutions $(R,V)$ of
\eqref{Eqn:TW.Diff} for fixed $\sigma_\delta$ with $|(R,V)-(R_\delta,V_\delta)|_{\fspaceL^2}\leq c_0$.

Furthermore the family $R,V$ depends continuously on the wave parameter $\sigma$.

\end{theorem}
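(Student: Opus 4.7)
The plan is to apply the implicit function theorem to \eqref{Eqn:TW.Diff} rewritten as a zero of a smooth map $\calF\at{R,V;\si}=0$ on a Banach space $\calX$ of even, decaying profile pairs. Restricting to the even subspace automatically kills the translational kernel $\at{R_\delta^\prime,V_\delta^\prime}$, so what must be proved is that the Fr\'echet derivative $\calL_\delta:=D_{\at{R,V}}\calF\at{R_\delta,V_\delta;\si_\delta}$ is a bijection of the even subspace for all sufficiently small $\delta$. This is precisely the spectral statement announced in the abstract, and it implies both local uniqueness and smooth dependence on $\si$.

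The linearisation couples a perturbation $\at{r,v}$ through
\begin{equation*}
r^\prime\at{x}=v\at{x+\tfrac12}-v\at{x-\tfrac12},\qquad\si_\delta v^\prime\at{x}=\phi_\delta\at{x+\tfrac12}\,r\at{x+\tfrac12}-\phi_\delta\at{x-\tfrac12}\,r\at{x-\tfrac12},
\end{equation*}
where $\phi_\delta:=\Phi^{\prime\prime}\at{R_\delta}$. The coefficient $\phi_\delta$ is of order one on the outer region $\abs{x}>\tfrac12$ and blows up like $\eps_\delta^{-m-2}$ on the peak $\abs{x}<\tfrac12$, with a narrow transition layer of width $\hat\mu_{\eps_\delta}$ around $x=\pm\tfrac12$. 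Following the asymptotic philosophy of \cite{HM15}, I would rescale inside each transition layer via $\bar x=\bat{\tfrac12-\abs{x}}/\hat\mu_{\eps_\delta}$. In these inner variables only one of the shifted arguments $x\pm\tfrac12$ remains near the layer while the other is pushed into the bulk where $\phi_\delta$ is small, so the corresponding nonlocal term becomes a controllable perturbation. The leading-order inner balance collapses the advance-delay equation onto the linearisation of the limit ODE \eqref{Eqn:LimitIVP},
\begin{equation*}
\bar r^{\prime\prime}\at{\bar x}=-\frac{2}{\bat{1+\bar S\at{\bar x}}^{m+2}}\,\bar r\at{\bar x},
\end{equation*}
around the known solution $\bar S$. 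Parity together with matching to the decaying outer region then forces any even inner solution to vanish, because the only even solution with $\bar r\at{0}\neq 0$ grows linearly and cannot be reconciled with a decaying outer profile.

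The main technical obstacle is to upgrade this formal reduction to a uniform quantitative statement. One must control, in an appropriately weighted norm, the residuals produced by (i) treating the far shift as a perturbation, (ii) approximating $\phi_\delta$ by its rescaled singular limit, and (iii) matching inner and outer expansions across the transition layer, with constants that do not degenerate as $\delta\to 0$. Once such uniform invertibility of $\calL_\delta$ is established, the implicit function theorem applied to $\calF\at{\,\cdot\,;\si}$ near $\at{R_\delta,V_\delta}$ yields a $C^1$ branch $\si\mapsto\at{R\at{\si},V\at{\si}}$, hence the asserted continuous dependence on $\si$. Uniqueness in an $\fspaceL^2$-ball of radius $c_0$ around $\at{R_\delta,V_\delta}$ is an immediate consequence of the same linear invertibility, while the open conditions of nonnegativity and unimodality are preserved in this ball because $R_\delta$ and $V_\delta$ satisfy them strictly in the interior of their support.
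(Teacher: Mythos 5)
Your overall architecture coincides with the paper's: reduce to an implicit-function argument on the even subspace, so that everything hinges on showing that the linearisation has no even kernel element, and obtain that by collapsing the linear advance-delay equation onto the linearisation of the limit ODE \eqref{Eqn:LimitIVP}. The ODE you write down, $\bar r''=-2\bat{1+\bar S}^{-m-2}\bar r$, is indeed (up to normalisation) the paper's asymptotic ODE \eqref{eqn:linODE}. But two of your concrete steps are wrong or unjustified. First, the localisation: the coefficient $\Phi''\bat{R_\delta\at{x}}\sim\bat{1-R_\delta\at{x}}^{-m-2}$ is singular only where $R_\delta$ approaches $1$, i.e.\ in an $O\at{\delta}$-neighbourhood of $x=0$ (the tip of the tent), and is $O\at{1}$ at $x=\pm\tfrac12$; it is \emph{not} of size $\eps_\delta^{-m-2}$ throughout $\abs{x}<\tfrac12$. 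The correct inner variable is therefore $\tilde x=x/\delta$ centred at $0$ --- which is where $\bar S\at{\abs{x}/\hat\mu_{\eps}}$ lives --- and not $\bar x=\bat{\tfrac12-\abs{x}}/\hat\mu_{\eps}$, which is the scaling adapted to the corners of the velocity profile; with the rescaling you state, the coefficient $\bat{1+\bar S\at{\bar x}}^{-m-2}$ never appears. The mechanism for discarding the nonlocal terms is also different from the one you describe: after eliminating $W$ one obtains the second-order equation \eqref{eq:2nd}, and for $\abs{x}\lesssim\delta$ \emph{both} shifted arguments $x\pm1$ leave the singular layer, so both the advance and the delay contributions are negligible while the local term $-2\si_\delta^{-1}\Phi''\at{R_\delta}S$ survives and produces the crucial factor $-2$ in \eqref{eqn:linODE}. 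Your first-order system has no local term at all, so ``one shift stays, one is pushed into the bulk'' cannot produce an ODE.

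Second, the step that actually proves Proposition \ref{prop:ker} --- excluding the even solution $\tilde T_e$ --- does not follow from ``linear growth cannot be reconciled with a decaying outer profile''. The asymptotic ODE holds only on $\abs{\tilde x}\le\tfrac12\delta^{-1}$; on the adjacent interval the eigenfunction obeys the different relation \eqref{AsympODE2}, driven by the delay term with coefficient $+1$ instead of $-2$, and is essentially affine there, so there is no decay to match against until $\abs{x}\ge\tfrac32$. The paper must compute the matching at $\abs{x}=\tfrac12$ explicitly: the inner contribution $c_{e,\delta}\tilde T_e\at{\tfrac12\delta^{-1}}$ and the outer one $-\tfrac{c_{e,\delta}}{2\delta}\tilde T_e'\at{\tfrac12\delta^{-1}}$ combine into $\tfrac{c_{e,\delta}}{2\delta}\bat{\tilde T_e'+2\delta\tilde T_e}$ with bracket tending to $2\neq0$, which is what forces $c_{e,\delta}=\DO{\delta^m}$; had the relative sign been opposite the bracket would vanish and the conclusion would fail, so the heuristic cannot replace the computation. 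Moreover the $c_{o,\delta}$-contributions drop out of this matching only because of the non-generic fact that $\tilde T_o$ is asymptotically \emph{constant} (Proposition \ref{Prop.Coeff}), which your sketch does not use. Finally, ``trivial even kernel implies bijection'' needs a Fredholm framework: on unweighted $\fspaceL^2$ the essential spectrum of $L_\delta$ reaches $0$, so one must work in the exponentially weighted spaces $\fspaceL^2_a$ with $0<a<a_c$ (Lemma \ref{lem:nabl}), where $\nabla^\pm_{1/2}$ is also invertible and the problem reduces to the scalar equation \eqref{eq:F2}; your unspecified ``Banach space of even, decaying profile pairs'' leaves this essential ingredient open.
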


The proof is based on an implicit function argument applied to the nonlinear travelling wave operator
\begin{equation}\label{eq:F}
    \mathcal{F}(R,V,\si_\delta)=\left(\begin{array}{l} \frac{\partial}{\partial x }R(.)-  \nabla^\pm_{1/2} V(.)\\
 \frac{\partial}{\partial x }V(.)- \frac 1 {\si_\delta} \nabla^\pm_{1/2}
\left(\Phi'(R(.))  \right) \end{array} \right),
\end{equation}
  where the main challenge is to control the kernel of its linearisation.
%
%
\section{Linearisation}
%

  The linearisation of (\ref{eq:F}) around a travelling wave
$\pair{R_\delta}{V_\delta}$ with speed $\si_\delta$ reads
\begin{align}
\label{Eqn:pullback}
L_\delta \left(\begin{array}{l}
S(.)\\ W(.)
\end{array} \right) &=\left(\begin{array}{l} \frac{\partial}{\partial x }S(.)-  \nabla^\pm_{1/2} W(.)\\
 \frac{\partial}{\partial x }W(.)- \frac 1 {\si_\delta} \nabla^\pm_{1/2}
\left(\Phi''(R_\delta(.)) S(.) \right) \end{array} \right)
\end{align}
  with $\nabla^\pm_{1/2}$ being the standard centered-difference operator with spacing $1/2$ . We consider $L_\delta$ as an operator on
the weighted Sobolev space
\begin{align}
\notag 
\fspaceL^2_a:=\{(S,W): \Rset \to \Rset^2 : \exp(ax) (S(x),W(x)) \in \fspaceL^2(\Rset,\Rset^2)\},
\end{align}
  which is for given parameter $a>0$ defined on the dense subspace
\begin{align}
\notag 
\fspaceH^1_{  a  }:=\{(S,W): \Rset \to \Rset^2 : \exp(  a  x) (S(x),W(x)) \in \fspaceH^1(\Rset,\Rset^2)\}.
\end{align}

  The first important observation is that the shift symmetry of \eqref{eq:F} implies
that $L_\delta$ has at least one kernel function.

\begin{lemma}
  Let $a>0$ be given, $\delta>0$ be sufficiently small, and $(R_\delta,V_\delta)$ be a travelling wave. Then
\begin{equation}\label{eq:EV}
 \bpair{S_{1,\delta}}{W_{1,\delta}}:=\pair{\frac{d R_\delta}{dx}}{\frac{d V_\delta}{dx}}
\end{equation}
is in the kernel of $L_\delta$ and belongs to $\fspaceH^1_a \cap \fspaceH^1_{-a}$.
\end{lemma}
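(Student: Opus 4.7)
The plan is to verify the kernel claim by direct differentiation of \eqref{Eqn:TW.Diff}, and then to obtain membership in the two-sided exponentially weighted spaces from pointwise decay of the profiles. For the kernel identity I would differentiate both equations in \eqref{Eqn:TW.Diff} with respect to $x$: since $\nabla^\pm_{1/2}$ commutes with $\frac{d}{dx}$ and $\frac{d}{dx}\Phi'\bigl(R_\delta(x)\bigr) = \Phi''\bigl(R_\delta(x)\bigr)\,R_\delta'(x)$ by the chain rule, one obtains
\begin{equation*}
R_\delta''(x) = \nabla^\pm_{1/2} V_\delta'(x), \qquad \sigma_\delta\,V_\delta''(x) = \nabla^\pm_{1/2}\bigl(\Phi''(R_\delta)\,R_\delta'\bigr)(x),
\end{equation*}
which by \eqref{Eqn:pullback} is precisely $L_\delta\bpair{R_\delta'}{V_\delta'} = 0$.

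For membership in $\fspaceH^1_a\cap\fspaceH^1_{-a}$ I need exponential decay of $R_\delta',V_\delta'$ and of their first derivatives at rate strictly exceeding $a$ at both $\pm\infty$. By the evenness of $R_\delta$ and $V_\delta$ it suffices to treat $x\to+\infty$. Since $R_\delta(x)\to 0$ there, the far-field behaviour is dictated by the linearisation of \eqref{Eqn:TW.Diff} about zero; the ansatz $\bpair{R}{V}\at{x} = \bpair{1}{w}\mhexp{-\lambda x}$ together with $\Phi''(0)=1$ yields the characteristic equation
\begin{equation*}
\sigma_\delta\,\lambda^2 = 4\sinh^2\at{\lambda/2}.
\end{equation*}
Its smallest positive real root $\lambda_\delta^*$ tends to infinity as $\sigma_\delta\to\infty$ (in fact like $\log\sigma_\delta$), and $\sigma_\delta\to\infty$ is guaranteed by the leading-order formula $\hat\si_\eps = \eps^{-m-2}\hat\mu_\eps^{\,2}\sim \ol{\mu}^{\,2}\eps^{-m}$ of the approximation proposition. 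Hence $\lambda_\delta^* > a$ whenever $\delta$ is sufficiently small.

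The main obstacle is converting this spectral information into genuine pointwise bounds $\abs{R_\delta\at{x}}+\abs{V_\delta\at{x}}\lesssim \mhexp{-\lambda_\delta^*\abs{x}}$ together with matching bounds on the first two derivatives, uniformly for small $\delta$. One route is a Wiener--Hopf / Fourier-multiplier argument on a half-line $[x_0,\infty)$ on which $R_\delta$ is so small (by the global approximation result) that $\Phi''\bat{R_\delta}-1$ can be treated as a lower-order perturbation; bootstrapping via \eqref{Eqn:TW.Diff} then transfers the decay from $R_\delta,V_\delta$ to $R_\delta',V_\delta'$ and further to the second derivatives. A shorter alternative is to invoke the exponential localisation already established in the existence theory for high-energy waves and simply verify that its decay rate exceeds any given $a$ once $\delta$ is small. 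Either route delivers $\bpair{S_{1,\delta}}{W_{1,\delta}}\in\fspaceH^1_a\cap\fspaceH^1_{-a}$.
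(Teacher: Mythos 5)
Your proposal is correct and follows essentially the same route as the paper: the kernel identity is obtained, exactly as in the paper, by differentiating \eqref{Eqn:TW.Diff} in $x$, and the membership in $\fspaceH^1_a\cap\fspaceH^1_{-a}$ rests on the exponential localisation of the profiles, which the paper simply imports from \cite{HR10} and \cite[Thm.~10]{HM15}. Your far-field linearisation argument (the characteristic equation $\si_\delta\lambda^2=4\sinh^2(\lambda/2)$, whose positive root is precisely the $a_c$ of Lemma \ref{lem:nabl} and grows like $\log\si_\delta$, so it eventually exceeds any fixed $a$) is a faithful expansion of what those references establish, and correctly explains why the order of quantifiers ``given $a$, then $\delta$ small'' is needed.
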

\begin{proof} The identity $L_\delta \bpair{S_{1,\delta}}{W_{1,\delta}}=0$ is obtained by differentiating \eqref{Eqn:TW.Diff} with \mbox{respect} to  $x$. The decay properties follow from ideas in \cite{HR10}  as in \cite[Thm.~10]{HM15}.
\end{proof}

  Our main asymptotic result can be formulated as follows and will be proven in several steps.
\begin{proposition}\label{prop:ker}
There exists $\delta_0>0$ such that
\[\mathrm{ker}\, L_\delta= \mathrm{span}  \,\left\{\bpair{S_{1,\delta}}{W_{1,\delta}}\right\}\]
  holds for all $ 0<\delta<\delta_0$.
\end{proposition}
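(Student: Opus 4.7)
The plan is to reduce the advance-delay-differential kernel equation $L_\delta\pair{S}{W}=0$ to a scalar second-order ODE via the high-energy rescaling of \cite{HM15}, and then to argue by parity and matching that this limit ODE forces the kernel to be one-dimensional. First I would eliminate $W$. Combining the two components of $L_\delta\pair{S}{W}=0$ (applying $\nabla^\pm_{1/2}$ to the first component and inserting the second) yields the closed second-order advance-delay equation
\begin{equation*}
\si_\delta\, S^{\prime\prime}\at{x} = T\at{x+1} - 2\,T\at{x} + T\at{x-1}, \qquad T\at{x}:=\Phi^{\prime\prime}\bat{R_\delta\at{x}}\,S\at{x},
\end{equation*}
while $W$ can be uniquely reconstructed from $S$ in $\fspaceL^2_a$ by inverting $\nabla^\pm_{1/2}W=S^\prime$, whose symbol is nonzero on the shifted Fourier contour for small $a>0$.

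Next I would exploit the concentration of the coefficient. By Proposition~3, $\Phi^{\prime\prime}\bat{R_\delta}$ has magnitude $\eps_\delta^{-m-2}$ on the inner layer $\abs{x}<1/2$ and only $\DO{1}$ elsewhere, so $T$ is essentially supported on the inner layer. For $x\in\oointerval{-1/2}{1/2}$ the shifted arguments $x\pm 1$ lie outside the support of $T$, and the scalar equation formally reduces to $\si_\delta S^{\prime\prime}\at{x}\approx -2\,\Phi^{\prime\prime}\bat{R_\delta\at{x}}S\at{x}$. Rescaling by $\bar{x}=x/\hat{\mu}_{\eps_\delta}$ and $\psi\at{\bar{x}}=S\at{\hat{\mu}_{\eps_\delta}\bar{x}}$, and inserting the leading-order expressions $\Phi^{\prime\prime}\bat{R_\delta}\sim \eps_\delta^{-m-2}\at{1+\bar{S}}^{-m-2}$ together with $\hat{\si}_{\eps_\delta}=\eps_\delta^{-m-2}\hat{\mu}_{\eps_\delta}^2$ from Proposition~3, one arrives at the limit ODE
\begin{equation*}
\psi^{\prime\prime}\at{\bar{x}} = -2\,\at{1+\bar{S}\at{\bar{x}}}^{-m-2}\,\psi\at{\bar{x}},
\end{equation*}
which is precisely the formal linearisation of \eqref{Eqn:LimitIVP}; differentiating the latter confirms that $\psi=\bar{S}^\prime$ is an explicit odd solution, corresponding to the translation mode \eqref{eq:EV}.

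The remaining task is to exclude a second independent kernel direction. The limit ODE has a two-dimensional solution space, and an Abel argument shows that any function linearly independent of $\bar{S}^\prime$ grows linearly as $\bar{x}\to\infty$ (since $\bar{S}^\prime\to\ol\mu$ and the coefficient $\at{1+\bar{S}}^{-m-2}$ decays), which is incompatible with the matching to the outer layer $\abs{x}\in\oointerval{1/2}{3/2}$. In the outer layer $T$ is small but the nonlocal shift terms $T\at{x\pm 1}$ take over and impose a compatibility condition with the inner profile; combined with the parity decomposition of $L_\delta$ under $x\mapsto -x$, this singles out a unique direction. I would formulate all of this as a Lyapunov--Schmidt decomposition of $\fspaceL^2_a$ into $\mathrm{span}\,\{\pair{S_{1,\delta}}{W_{1,\delta}}\}$ and a complement, then establish a $\delta$-uniform bounded right-inverse of $L_\delta$ on the complement and conclude that no extra kernel element can persist.

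The principal obstacle is turning the formal reduction into uniform quantitative estimates. The shift terms $T\at{x\pm 1}$ cannot be discarded, since they encode the inner-to-outer matching and control the behaviour of $S$ in $\oointerval{1/2}{3/2}$; they must be compared against the $\DO{\eps_\delta^m}$ error bounds of Proposition~3 to measure the deviation of $L_\delta$ from the ODE limit. The expected technical work consists of a careful parity splitting of $L_\delta$, a uniform inverse on the subspace transverse to $\bar{S}^\prime$, and a quantitative inner--outer matching estimate across the transition region $\abs{x}\approx 1/2$ where $R_\delta$ passes from near-$1$ to near-$0$ behaviour. Once these pieces are in place, the proposition follows from a standard perturbation argument, and the family dependence on $\sigma$ asserted in Theorem~\ref{ass:basic} is then a direct consequence of the implicit function theorem applied to $\calF$ at $\pair{R_\delta}{V_\delta}$.
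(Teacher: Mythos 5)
Your plan follows essentially the same route as the paper: reduce the kernel equation to the scalar second--order advance--delay equation for $S$, pass to the inner rescaling where it becomes the ODE $\tilde T''=-2\tilde P\tilde T$ whose odd solution is the asymptotically constant (non-generic) translation mode $\bar S'$, and eliminate the linearly growing even solution by a quantitative inner--outer matching carried by the shift terms $T(x\pm1)$ on $\tfrac12<|x|<\tfrac32$. The only real difference is the endgame: you close with a Lyapunov--Schmidt splitting and a $\delta$-uniform right inverse on a complement, whereas the paper normalises an arbitrary kernel element, extracts a $\fspace{BC}^1_{\mathrm{loc}}$-convergent subsequence via Green's-function convolution estimates, shows every accumulation point is the same multiple of $\tilde T_o$ (so that $c_{e,\delta}=\DO{\delta^m}$), and then contradicts the existence of a second kernel direction chosen in the orthogonal complement of $\bpair{S_{1,\delta}}{W_{1,\delta}}$.
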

\subsection{Prelimenaries}
  In what follows we denote the wave speed by $c=\sqrt{\si}$.
\begin{lemma}\label{lem:nabl}\quad
\begin{enumerate}
  \item[(a)]  The operator $\nabla^\pm_{1/2}$ is invertible on $\fspaceL^2_a$ for $a>0$.
  \item[(b)]The operator $L_\delta: \fspaceH^1_a \to \fspaceL^2_a$ is Fredholm for $0<a<a_c$, where $a_c>0$ is   the uniquely determined by   $\sinh(a_c/2)/(a_c/2))=c$.
 \end{enumerate}
\end{lemma}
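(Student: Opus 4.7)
For both parts the strategy is to conjugate with the exponential weight $M_a\colon\fspaceL^2_a\to\fspaceL^2$, $\at{M_a u}\at{x}=\mhexp{ax}u\at{x}$, so that constant-coefficient operators become standard Fourier multipliers on the unweighted space, with the real frequency $k$ replaced by the shifted complex frequency $k-\iu a$.

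For \emph{(a)}, the conjugated operator $M_a\nabla^\pm_{1/2}M_a^{-1}$ is the Fourier multiplier with symbol $\mhexp{-a/2}\mhexp{\iu k/2}-\mhexp{a/2}\mhexp{-\iu k/2}=2\iu\,\sin\bat{\at{k-\iu a}/2}$. Using $\babs{\sin\at{x+\iu y}}^2=\sin^2 x+\sinh^2 y$ one finds
\begin{equation*}
\babs{\sin\bat{\at{k-\iu a}/2}}^2=\sin^2\at{k/2}+\sinh^2\at{a/2}\,,
\end{equation*}
which is uniformly bounded from above and bounded from below by $\sinh^2\at{a/2}>0$; Plancherel's theorem then yields a bounded inverse, and undoing the conjugation proves (a).

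For \emph{(b)}, I would split $L_\delta=L_\delta^\infty+K_\delta$, where $L_\delta^\infty$ arises from replacing $\Phi^{\prime\prime}\bat{R_\delta\at{x}}$ by $\Phi^{\prime\prime}\at{0}=1$ in \eqref{Eqn:pullback}, and $K_\delta$ is the remainder, consisting of $\nabla^\pm_{1/2}$ composed with multiplication by $\Phi^{\prime\prime}\bat{R_\delta}-1$. Because $R_\delta$ decays exponentially at infinity (by the argument cited in the preceding lemma), so does this multiplier, and the standard Rellich-type argument---local compactness on bounded intervals combined with an exponentially small tail estimate in the weighted norm---shows that $K_\delta\colon\fspaceH^1_a\to\fspaceL^2_a$ is compact.

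Conjugating $L_\delta^\infty$ by $M_a$ produces a $2\times 2$ matrix-valued Fourier multiplier whose determinant is
\begin{equation*}
\De\at{k}=-\at{k-\iu a}^2+\frac{4}{c^2}\sin^2\bat{\at{k-\iu a}/2}\,.
\end{equation*}
At $k=0$ this gives $\De\at{0}=a^2-\at{4/c^2}\sinh^2\at{a/2}$, which is positive precisely when $c>\sinh\at{a/2}/\at{a/2}$; by strict monotonicity of $y\mapsto\sinh\at{y}/y$ on $\oointerval{0}{\infty}$ this characterises $a_c$ uniquely, and $\De\at{0}>0$ for $0<a<a_c$. The hard part will be to rule out further real zeros of $\De$ on the shifted line. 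I would do this by separating $\De\at{k}=0$ into its real and imaginary parts (imaginary: $ac^2 k=\sin\at{k}\sinh\at{a}$; real: $-k^2+a^2+4c^{-2}\sin^2\at{k/2}-4c^{-2}\sinh^2\at{a/2}\cos\at{k}=0$) and arguing, as in the dispersion analyses underlying \cite{FP4,HR10}, that no joint solution exists for $0<a<a_c$ other than $k=0$. Since $\abs{\De\at{k}}\sim k^2$ as $\abs{k}\to\infty$ while the cofactor entries of the matrix symbol grow only linearly, the pointwise inverse $M\at{k}^{-1}$ has entries of order $1/k$, making $L_\delta^\infty\colon\fspaceH^1_a\to\fspaceL^2_a$ a bounded isomorphism. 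Adding back the compact perturbation $K_\delta$ preserves Fredholmness with index zero, completing (b).
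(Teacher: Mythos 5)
Your route for part (b) --- splitting off the constant-coefficient operator at infinity, showing the remainder is compact via the decay of $\Phi''(R_\delta)-1$, and then inverting the limiting operator through its matrix Fourier symbol on the shifted line $k-\iu a$ --- is exactly the mechanism behind the paper's proof; the paper simply outsources the symbol analysis to the explicit essential-spectrum formula of \cite[Lem. 4.2]{FP3}, which writes $\sigma_{ess}(L_\delta)$ in $\fspaceL^2_a$ as the two curves $\lambda=\mathcal{P}_\pm(\iu k-a)$ with $\mathcal{P}_\pm(\mu)=\mu\pm\frac{2}{c}\sinh(\mu/2)$, and concludes Fredholmness from $0\notin\sigma_{ess}(L_\delta)$. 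Part (a) is the same one-line Fourier computation in both versions. The only genuinely incomplete step in your write-up is the one you yourself flag as ``the hard part'': ruling out nonzero real roots of $\De$. The coupled real/imaginary-part system you set up is correct but unnecessary, because your determinant factors into the two scalar dispersion functions,
\[
\De(k)\;=\;-\Bigl[(k-\iu a)-\tfrac{2}{c}\sin\bigl(\tfrac{k-\iu a}{2}\bigr)\Bigr]\,\Bigl[(k-\iu a)+\tfrac{2}{c}\sin\bigl(\tfrac{k-\iu a}{2}\bigr)\Bigr],
\]
i.e.\ (up to a factor $\iu$ in each bracket) into $\mathcal{P}_+(\iu k-a)\,\mathcal{P}_-(\iu k-a)$. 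Each bracket has imaginary part $-a\mp\frac{2}{c}\cos(k/2)\sinh(a/2)$, whose absolute value is at least $a-\frac{2}{c}\sinh(a/2)$, and this is strictly positive precisely for $0<a<a_c$ by the monotonicity of $y\mapsto\sinh(y)/y$. This single estimate --- the paper's statement $\max\{\mathrm{Re}\,\lambda:\lambda\in\sigma_{ess}(L_\delta)\}=-a+\frac{2}{c}\bigl|\sinh(a/2)\bigr|<0$ --- kills all real $k$ at once, not just $k=0$, so no residual dispersion analysis in the style of \cite{FP4,HR10} is needed. With that step closed, your argument is complete and in fact slightly more self-contained than the paper's, since it also yields Fredholm index zero directly from the isomorphism-plus-compact-perturbation structure.
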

\begin{proof}
 Part (a)   follows by Fourier arguments since $\nabla^\pm_{1/2}$ acts on $\fspaceL^2_a$ as a weighted difference operator . For part (b), the essential spectrum can be calculated explicitly as in \cite[Lem. 4.2]{FP3}. For any $a \in \Rset$, the essential spectrum of $L_\delta$ in $\fspaceL^2_a$ is given by  the following union of two curves:
\begin{equation}
\notag 
\{\lambda : \lambda=\mathcal{P}_+(i k-a) \mbox{ for some } k \in \Rset\} \cup \{\lambda : \lambda=\mathcal{P}_-(i k-a) \mbox{ for some } k \in \Rset\},
\end{equation}
with $\mathcal{P}_\pm(\mu)=\mu\pm 2\frac{1}{\sqrt{\si_\delta}} \sinh(\mu/2)$. In particular,
\[\max\{ \mbox{Re}  \lambda : \lambda \in \sigma_{ess}(L_\delta)\}=-a+ \frac{2}{c} \left|\sinh\left(\frac a 2\right)\right|=:-b_*(c,a)<0,\]
  so   the  essential spectrum does not intersect the closed right complex half plane and hence $0$ if and only if $c>1$ and  $0<a<a_c$, where $a_c>0$ is the solution of the given transcendental equation and   increases with   $c$. As $0$ is not in the essential spectrum $L_\delta$, the operator itself is Fredholm.
\end{proof}
\subsection{Rescaling}
We next transform \eqref{Eqn:pullback} into a second-order advance-delay-differential equation. \mbox{Letting}
$S_\delta(x)= \exp(-a x) G_\delta(x)$ with $G_\delta \in \fspaceL^2$   we   express the linearised equation as
\begin{equation}\label{eq:2nd}
\left( \frac{d}{dx} -a \right)^2 G_\delta= \Delta_{1,-a}  \frac{1}{\si_\delta} Q_\delta G_\delta\,,
\end{equation}
where the    transformed discrete Laplacian is given by
\begin{align}
\label{Def.Mod.Lapl}
\Delta_{1,-a} F (x) = \exp(-a) F(x+1)+\exp(+a) F(x-1)-2 F(x)\,.
\end{align}
Any solution $G_\delta$ to \eqref{eq:2nd} gives immediately a corresponding $S_\delta$ and then due to the invertibility of $\nabla^{\pm}_{1/2}$ on
$\fspaceL^2_a$ also $W_\delta$ to obtain a solution of \eqref{Eqn:pullback}.

  The key asymptotic observation for the high-energy limit $\delta\to0$ is that the advance-delay-differential equation \eqref{eq:2nd} implies an effective ODE for both $G_\delta$ and $S_\delta$ in the vicinity of $x=0$ (`tip of the tent' in Fig.\ref{FigNum}). We therefore rescale the profile $G_\delta$ according to
\begin{equation}
\notag 
    x=\delta \tilde x, \quad \tilde G_\delta (\tilde x)= G_\delta (\delta \tilde x), \quad \tilde Q_\delta(\tilde x)=  \frac{\delta^2}{\si_\delta} Q_\delta (\delta \tilde x), \quad \frac{d}{d\tilde x}= \frac 1 \delta \frac{d}{d x}\,.
\end{equation}
  With respect to the   new coordinates,  \eqref{eq:2nd}   becomes
\begin{equation}\label{eq:2ndtil}
    \left( \frac{d}{  d\tilde{x} } -\delta a \right)^2 \tilde G_\delta= \Delta_{1/\delta,-a}\bat{\tilde Q_\delta\tilde G_\delta} ,
\end{equation}
  where the operator $\Delta_{1/\delta,-a}$ is defined analogously to \eqref{Def.Mod.Lapl}   with spacing $\delta^{-1}$.   Moreover, the Green's function of the differential operator on the left hand side is given by
\begin{equation}\label{eq:greenHtil}
    \tilde H_\delta(\tilde x)= -\tilde x \exp(\delta a \tilde x) \chi_{(-\infty,0)}(\tilde x)
\end{equation}
and   the corresponding convolution operator has the following properties.
\begin{lemma} \label{lem:1}There exists a   constant $C>0$ which depends on the parameter $a$ but not on $\delta$   such that for all $\tilde F \in \fspaceL^2$ we have
\begin{enumerate}
\renewcommand{\labelenumi}{(\roman{enumi})}
\item $\tilde H_\delta \ast (\Delta_{1/\delta,-a} \tilde F)=(\Delta_{1/\delta,-a}\tilde H_\delta) \ast  \tilde F$,
\item   $\delta^2 \norm{\tilde H_\delta \ast \tilde F }_2+\delta \norm{\at{\tilde H_\delta \ast \tilde F}^\prime }_2+\norm{\at{\tilde H_\delta \ast \tilde F}^{\prime\prime}}_2 \leq C \norm{\tilde F}_2$,
\item    $\delta^{1/2}\norm{\at{\tilde H_\delta \ast \tilde F}^\prime }_\infty \leq C \norm{\tilde{F}}_2$ and
$\norm{\at{\tilde H_\delta \ast \tilde F}^\prime }_\infty\leq C\norm{\tilde{F}}_1$.
             \end{enumerate}
\end{lemma}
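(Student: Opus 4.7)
The strategy is to derive all three statements directly from the explicit form of the Green's function \eqref{eq:greenHtil}: Fourier analysis handles the $\fspaceL^2$-bounds in (ii) and Young's convolution inequality the sup-norm bounds in (iii). Part (i) will be immediate, since $\Delta_{1/\delta,-a}$ is the finite linear combination of translation operators
\[
\Delta_{1/\delta,-a}\tilde F(\tilde x) = \mhexp{-a}\tilde F(\tilde x+1/\delta)+\mhexp{a}\tilde F(\tilde x-1/\delta)-2\tilde F(\tilde x),
\]
and translations trivially commute with convolution by Fubini.

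For part (ii) I will compute the Fourier transform
\[
\widehat{\tilde H_\delta}(k) = \int_0^\infty y\,\mhexp{-(\delta a-\iu k)y}\dint y = \frac{1}{(\delta a-\iu k)^2},
\]
so that $\babs{\widehat{\tilde H_\delta}(k)}^2 = (\delta^2a^2+k^2)^{-2}$. For $j\in\{0,1,2\}$, Plancherel then gives
\[
\bnorm{(\tilde H_\delta\ast\tilde F)^{(j)}}_2^2 = \int_{\Rset}\frac{k^{2j}}{(\delta^2a^2+k^2)^2}\,\babs{\widehat{\tilde F}(k)}^2\dint k.
\]
The case $j=2$ is bounded by $\bnorm{\tilde F}_2^2$ trivially, while the arithmetic--geometric mean inequality $\delta^2a^2+k^2\geq 2\delta a\nabs{k}$ yields
\[
\frac{\delta^4}{(\delta^2a^2+k^2)^2}\leq \frac{1}{a^4}\,,\qquad
\frac{\delta^2 k^2}{(\delta^2a^2+k^2)^2}\leq \frac{1}{4a^2}\,,
\]
from which the first two terms of (ii) follow after taking square roots.

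For part (iii), (ii) and Plancherel give $\tilde H_\delta\ast\tilde F\in\fspaceH^2$ whenever $\tilde F\in\fspaceL^2$, so I may differentiate under the convolution to obtain $(\tilde H_\delta\ast\tilde F)^\prime=\tilde H_\delta^\prime\ast\tilde F$ with
\[
\tilde H_\delta^\prime(\tilde x) = -(1+\delta a\tilde x)\mhexp{\delta a\tilde x}\chi_{(-\infty,0)}(\tilde x).
\]
Writing $t=-\delta a\tilde x\geq 0$, the maximum of $\babs{(1-t)\mhexp{-t}}$ on $[0,\infty)$ is $1$, giving $\bnorm{\tilde H_\delta^\prime}_\infty\leq 1$; the same substitution in the squared $\fspaceL^2$-norm yields $\bnorm{\tilde H_\delta^\prime}_2^2=(\delta a)^{-1}\int_0^\infty(1-u)^2\mhexp{-2u}\dint u \leq C\delta^{-1}$. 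Young's convolution inequality $\bnorm{f\ast g}_\infty\leq\bnorm{f}_p\bnorm{g}_{p^\prime}$ with the pairs $(p,p^\prime)=(2,2)$ and $(p,p^\prime)=(\infty,1)$ then delivers the two estimates of (iii).

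The main difficulty is really bookkeeping rather than anything conceptual: one has to extract the precise negative powers of $\delta$ in (ii) and (iii), which is what forces the weighted multiplier bounds above. Once these are in hand everything reduces to direct computation with the explicit Green's function \eqref{eq:greenHtil} and to standard convolution inequalities, with constants depending only on the parameter $a$ and not on $\delta$.
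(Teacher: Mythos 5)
Your proposal is correct and follows essentially the same route as the paper: part (i) by commuting the shift operators in $\Delta_{1/\delta,-a}$ with convolution, part (ii) by Plancherel applied to the symbol $(\iu k-\delta a)^{-2}$ of $\tilde H_\delta$, and part (iii) by Young's inequality with the bounds $\bnorm{\tilde H_\delta^\prime}_2\leq C\delta^{-1/2}$ and $\bnorm{\tilde H_\delta^\prime}_\infty\leq C$. You merely supply more of the elementary computations (the explicit multiplier estimates via AM--GM and the evaluation of $\tilde H_\delta^\prime$) that the paper leaves implicit.
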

\begin{proof} Part (i) follows immediately from the Fourier representation of $\Delta_{1/\delta,-a}$ and $\tilde H_\delta$.   In particular, the symbol of $\tilde H_\delta$ is $
h_\delta\nat{\tilde{k}}:=\at{\iu \tilde{k}-\delta a}^{-2}$, so part (ii) is a direct consequence of Parseval's inequality. We finally observe that Young's inequality
yields
\[ \bnorm{ \at{\tilde H_\delta \ast \tilde F}^\prime}_\infty=\bnorm{ \tilde H_\delta' \ast \tilde F}_\infty \leq \bnorm{ \tilde H_\delta'\|_q\|\tilde F}_p \quad \mbox{ with } \quad \frac 1 q +\frac 1 p =1,\]
and hence part (iii) via $\|\tilde H_\delta'\|_2 \leq C \delta^{-1/2}$ for $p=2$ and $\|\tilde H_\delta'\|_\infty \leq C$ for $p=1$.
\end{proof}

Our asymptotic analysis strongly relies on the following characterisation of $\tilde{Q}_\delta$.

\begin{proposition}[properties of the coefficient function]\label{Prop.Coeff}\quad
\begin{enumerate}
\item We have
\begin{align*}
\tilde Q_\delta(\tilde x)=\tilde P(\tilde x) +\delta^{m+2} \tilde Z_\delta(\tilde x )
\end{align*}
where
$\tilde{P}$ is even, decays as $\tilde{x}^{-m-2}$ as $\tilde{x}\to\infty$, and does not depend on $\delta$, while the perturbation $\tilde{Z}_\delta$ is uniformly bounded in $\fspaceL^\infty$.
\item The solution space of the ODE
\begin{align}
\label{eqn:linODE}
\tilde{T}^{\prime\prime}=-2\tilde{P}\tilde{T}
\end{align}
is spanned by an even function $\tilde{T}_e$ and an odd function $\tilde{T}_o$, which can be normalized by
\begin{align*}
\tilde{T}_e^\prime\at{\tilde{x}}\quad\xrightarrow{\;\;\tilde{x}\to+\infty\;\;}\quad 1\,,\qquad
\tilde{T}_o\at{\tilde{x}}\quad\xrightarrow{\;\;\tilde{x}\to+\infty\;\;}\quad 1
\end{align*}
and satisfy
\begin{align*}
\sup_{\tilde{x}\in\Rset}\Bat{\babs{\tilde{T}_e^\prime\at{\tilde{x}}\tilde{x}-\tilde{T}_e\at{\tilde{x}}}+\babs{\at{\tilde{T}_e^\prime\at{\tilde{x}}-1}\tilde{x}^{m}}+\babs{\tilde{T}_o^\prime\at{\tilde{x}}\tilde{x}^{m}}}\leq C
\end{align*}
for some constant $C$ depending on $m$.
\end{enumerate}

\end{proposition}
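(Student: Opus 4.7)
For part~(1) I would derive $\tilde P$ from the explicit form of $\Phi''$ together with the limit profile $\bar S$, and estimate the remainder by a two-zone argument. Concretely, I start from
\[
\tilde Q_\delta(\tilde x)
=\frac{\delta^2}{\sigma_\delta}\,\Phi''\bat{R_\delta(\delta\tilde x)}
=\frac{\delta^2}{\sigma_\delta\,\eps_\delta^{m+2}}\,\Bat{\frac{1-R_\delta(\delta\tilde x)}{\eps_\delta}}^{-(m+2)},
\]
using $\Phi''(r)=(1-r)^{-(m+2)}$. The global approximation of the previous proposition gives $\sigma_\delta\eps_\delta^{m+2}=\bar\mu^{2}+\DO{\eps_\delta}$, and on the inner zone $|\delta\tilde x|<1/2$ reduces the bracketed ratio to $1+\bar S(\tilde x/\bar\mu)$ up to a multiplicative error of order $\eps_\delta^m$, after absorbing the small mismatch between $\delta$, $\eps_\delta$ and $\hat\mu_{\eps_\delta}/\bar\mu$. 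This motivates the definition
\[
\tilde P(\tilde x):=\frac{1}{\bar\mu^{2}\bat{1+\bar S(\tilde x/\bar\mu)}^{m+2}},
\]
which is manifestly even and independent of $\delta$ and decays like $|\tilde x|^{-(m+2)}$ because $\bar S(\bar x)\sim\bar\mu|\bar x|$ at infinity. The $\fspaceL^\infty$-bound on $\tilde Z_\delta:=\delta^{-(m+2)}(\tilde Q_\delta-\tilde P)$ then follows by splitting the real line: on the inner zone a Lipschitz expansion of $r\mapsto(1-r)^{-(m+2)}$ around $\hat R_{\eps_\delta}$, combined with the lower bound $1-R_\delta\geq c\,\eps_\delta$ and the pointwise asymptotics of $R_\delta$, controls the error; on the outer zone $|\delta\tilde x|\geq 1/2$ both $\tilde Q_\delta$ and $\tilde P$ are individually of order $\delta^{m+2}$, so the triangle inequality closes the estimate.

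For part~(2), the key observation is that differentiating \eqref{Eqn:LimitIVP} once yields $\bar S'''=-2(1+\bar S)^{-(m+2)}\bar S'$, so $\tilde T_o(\tilde x):=\bar\mu^{-1}\bar S'(\tilde x/\bar\mu)$ is an odd solution of \eqref{eqn:linODE} satisfying $\tilde T_o(\tilde x)\to 1$ as $\tilde x\to+\infty$ (because $\bar S'\to\bar\mu$). Since the coefficient $\tilde P$ is even, the solution space of \eqref{eqn:linODE} splits into a one-dimensional odd and a one-dimensional even subspace, and a linearly independent even solution is obtained by reduction of order, for instance via $\tilde T_e(\tilde x)=\tilde T_o(\tilde x)\int_{A}^{\tilde x}\tilde T_o(y)^{-2}\dint{y}$ on $\tilde x>0$ with $A$ tuned so that $\tilde T_e'(0^+)=0$; the apparent singularity at $\tilde x=0$ is cancelled by the simple zero of $\tilde T_o$, and a multiplicative rescaling achieves the prescribed $\tilde T_e'(\tilde x)\to 1$. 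The pointwise estimates then follow by integrating \eqref{eqn:linODE}: the explicit formula $\tilde T_o'(\tilde x)=\tfrac{2}{(m+1)\bar\mu^{2}}(1+\bar S(\tilde x/\bar\mu))^{-(m+1)}$ immediately yields $\tilde T_o'(\tilde x)=\DO{|\tilde x|^{-(m+1)}}$, which is stronger than the claimed $\DO{|\tilde x|^{-m}}$; the identity $\tilde T_e'(\tilde x)-1=2\int_{\tilde x}^{\infty}\tilde P(y)\tilde T_e(y)\dint{y}$ combined with $\tilde T_e(y)\sim y$ and $\tilde P(y)\sim y^{-(m+2)}$ gives $\tilde T_e'(\tilde x)-1=\DO{|\tilde x|^{-m}}$; and the resulting expansion $\tilde T_e(\tilde x)=\tilde x+C+\DO{|\tilde x|^{1-m}}$ shows that $\tilde T_e'(\tilde x)\tilde x-\tilde T_e(\tilde x)$ remains bounded thanks to the hypothesis $m>1$.

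The main technical obstacle lies in part~(1): the global approximation of \cite{HM15} is quantified only in $\fspaceL^q$, while the amplification factor $\eps_\delta^{-(m+2)}$ hidden in $\Phi''(R_\delta)$ forces a pointwise refinement of the approximation error on the inner zone together with a quantitative lower bound $1-R_\delta(x)\geq c\,\eps_\delta$ that must hold uniformly in $x$. Both ingredients are implicit in the construction of the family in \cite{HM15} but need to be extracted explicitly before the argument above becomes rigorous; once in place, parts~(1) and~(2) combine without further difficulty.
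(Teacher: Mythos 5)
Your part~(2) is correct and is essentially the paper's own argument, spelled out: the ``non-generic property'' invoked in the paper is exactly your observation that differentiating \eqref{Eqn:LimitIVP} shows that $\bar S'$, suitably rescaled, solves \eqref{eqn:linODE} and is asymptotically constant, so that $\tilde T_o=c\tilde R_*'$; the reduction-of-order construction of $\tilde T_e$ and the integral identities $\tilde T_e'\at{\tilde x}-1=2\int_{\tilde x}^{\infty}\tilde P\tilde T_e=\DO{\tilde x^{-m}}$ and $\frac{d}{d\tilde x}\bat{\tilde T_e'\at{\tilde x}\tilde x-\tilde T_e\at{\tilde x}}=-2\tilde P\at{\tilde x}\tilde T_e\at{\tilde x}\tilde x=\DO{\tilde x^{-m}}$ (integrable at infinity precisely because $m>1$) are the ``standard ODE arguments'' the paper leaves implicit. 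No objection there.

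For part~(1) the paper itself proves nothing: it defers the estimate on $\tilde Z_\delta$ entirely to the asymptotic analysis of \cite{HM15}, so you are not doing less than the authors. But the gap you flag at the end is larger than your wording suggests, and your two-zone sketch would not close even after ``extracting'' the two ingredients you name. The global approximation proposition already allows $q=\infty$, so pointwise control of $R_\delta$ at accuracy $\DO{\eps_\delta^m}$ is available; the obstruction is purely one of accuracy. Feeding $\norm{R_\delta-\hat R_{\eps_\delta}}_\infty=\DO{\eps_\delta^m}$ and $1-R_\delta\geq c\,\eps_\delta$ into the Lipschitz bound for $r\mapsto(1-r)^{-m-2}$ gives $\babs{\Phi''\at{R_\delta}-\Phi''\at{\hat R_{\eps_\delta}}}\lesssim\eps_\delta^{-m-3}\,\eps_\delta^{m}$ near the tip, and after multiplication by the prefactor $\delta^2/\si_\delta=\delta^2\eps_\delta^{m+2}/\mu_\delta^2\sim\delta^{m+2}$ this yields only $\tilde Q_\delta-\tilde P=\DO{\delta^{m-1}}$ at $\tilde x=0$, i.e.\ $\tilde Z_\delta=\DO{\delta^{-3}}$ --- three orders short of bounded. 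What is actually required is a scale-adapted inner expansion, namely $\bat{1-R_\delta\at{\delta\tilde x}}/\eps_\delta=\tilde R_*\at{\tilde x}\bat{1+\DO{\delta^{m+2}}}$ uniformly on the inner zone together with $\mu_\delta=\delta\,\tilde P\at{0}^{-1/2}\bat{1+\DO{\delta^{m+2}}}$ to control the prefactor (note that $\hat\mu_{\eps}$ already carries an $\DO{\eps}$ correction through $\ol\ka$, so these constants must be built into the definition of $\tilde P$ rather than estimated away). These are strictly stronger statements than the quoted global approximation and constitute the actual content of the citation to \cite{HM15}; without importing them explicitly your argument for the $\fspaceL^\infty$-bound on $\tilde Z_\delta$ does not go through.
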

\begin{proof}
We refer to \cite{HM15} for the details but mention that
the coefficient function $\tilde{P}$ has been constructed from the solution of the nonlinear ODE initial-value problem \eqref{Eqn:LimitIVP}. In a nutshell, we have $\tilde{P}:=\Psi^{\prime\prime}\at{\tilde{R}_*}$, where
$\tilde{R}_*$ is the even and asymptotically affine solution to
\begin{align*}
\tilde{R}_*^{\prime\prime}=-2\Psi^\prime\at{\tilde{R}_*}\,,\qquad \tilde{R}_*\at{0}=0\,,\qquad \tilde{R}_*\at{0}=1
\end{align*}
with $\Psi^{\prime}\at{r}\sim \at{r}^{-m-1}$ for large $\abs{r}$.
In particular,
$\tilde{P}$ has the \emph{non-generic property} that the odd solution to the linear ODE \eqref{eqn:linODE} is asymptotically constant as it is given by $\tilde{T}_o=c\tilde{R}_*^\prime$ for some constant $c$. The remaining assertions on \eqref{eqn:linODE}
follow from standard ODE arguments and the estimates for $\tilde{Z}_\delta$ are provided by
an asymptotic analysis of the nonlinear advance-delay-differential equation \eqref{Eqn:TW.Diff}.
\end{proof}

Using \eqref{eq:greenHtil} and Proposition \ref{Prop.Coeff} we can finally transform
\eqref{eq:2ndtil} into the fixed point problem
\begin{equation}\label{eq:G}
    \tilde G_\delta= \tilde H_\delta \ast \Bat{\Delta_{1/\delta,-a} \left(\tilde P \tilde G_\delta+\delta^{m+2} \tilde Z_\delta\tilde G_\delta\right)}
\end{equation}
and are now in the position to characterize the kernel of $L_\delta$ by
identifying the aforementioned asymptotic ODE.
\subsection{Sketch of the proof of Proposition \ref{prop:ker}}
  In this section we fix $a>0$, consider families $\at{\tilde{G}_\delta}_{0<\delta<1}\subset\fspaceL^2$ of solutions to \eqref{eq:G}, and show that $\tilde{G}_\delta$ is -- up to normalisation factors and small error terms -- uniquely determined.
\bigpar
{\bf Compactness:}   %
Bootstrapping shows that $\tilde{G}_\delta$ is smooth, and without loss of generality we normalise $\tilde G_\delta$ by
\begin{equation}\label{eq:Gnormal}
   \babs{\tilde G_\delta(0)}+\| \tilde P \tilde G_\delta\|_1 + \|\tilde P \tilde G_\delta\|_2=1.
\end{equation}
In view of Lemma \ref{lem:1} -- and thanks to \eqref{eq:G}, $\norm{\tilde{Z}_\delta}_\infty\leq C$, and the uniform $\fspaceL^p$-continuity of the operator $\Delta_{1/\delta,-a}$ -- we estimate
\begin{align*}
\norm{\tilde{G}_\delta}_2\leq C\delta^{-2}\norm{\tilde{P}\tilde{G}_\delta}_2+C\delta^m\norm{\tilde{G}_\delta}_2
\end{align*}
and obtain  $\norm{\tilde{G}_\delta}_2\leq C\delta^{-2}$ for all sufficiently small $\delta>0$. Moreover, using
Lemma~\ref{lem:1} again as well as $m>1$ we find
\begin{align}
\notag 
\norm{\tilde{G}_\delta^\prime}_\infty\leq C\at{\bnorm{\tilde{P}\tilde{G}_\delta}_1+\delta^{m+3/2}\norm{\tilde{G}_\delta}_2}\leq C
\end{align}
and
\begin{align*}
\norm{\tilde{G}_\delta^{\prime\prime}}_2\leq C\Bat{\bnorm{\tilde{P}\tilde{G}_\delta}_2+\delta^{m+2}\norm{\tilde{G}_\delta}_2}\leq C\,,
\end{align*}
which in turn give rise to uniform Lipschitz and  H\"older estimates for $\tilde{G}_\delta$ and $\tilde{G}_\delta^\prime$, respectively. By the Arzel\`a-Ascoli theorem we can therefore extract a (not relabeled) subsequence such that $\tilde G_\delta$ converges in $\fspace{BC}^1_\mathrm{loc}$ to a  limit function $\tilde G_0$. The bounds for $\tilde G_\delta(0)$ and $\norm{\tilde{G}_\delta^\prime}_\infty$ ensure
\begin{align}
\label{Eqn:AffBounds}
|\tilde G_\delta(\tilde x)|  \leq 1 + C|\tilde x|
\end{align}
and hence
  \[ \| \tilde P \tilde G_\delta - \tilde P \tilde G_0\|_1\;\; +\;\; \|\tilde P \tilde G_\delta- \tilde P \tilde G_0\|_2 \quad \xrightarrow{\;\;\delta \to 0\;\;}\quad 0\]
by dominated convergence and due to the tightness of $\tilde P$. In particular, the limit $\tilde G_0$ does not vanish as it also satisfies the normalisation condition \eqref{eq:Gnormal}.
\bigpar
{\bf Asymptotic ODE:}   We next study the functions $\tilde{S}_\delta$ with
\begin{align*}
\tilde{S}_\delta\at{\tilde{x}}:=\exp\at{-a\delta \tilde{x}}\tilde{G}_\delta\at{\tilde{x}}=S_\delta\at{\delta\tilde{x}}\,,
\end{align*}
which also converge in $\fspace{BC}^1_\mathrm{loc}$ to the nontrivial limit $\tilde{S}_0=\tilde{G}_0$ and satisfy the advance-delay-differential equation
\begin{align}
\label{Eqn:ADDScaled}
\tilde{S}_\delta^{\prime\prime}=\Delta_{1/\delta,0}\Bat{\at{\tilde{P}+\delta^{m+2}\tilde{Z}_\delta}\tilde{S}_\delta}
\end{align}
thanks to \eqref{eq:2ndtil}, where $\Delta_{1/\delta,0}$ abbreviates the discrete Laplacian with spacing $1/\delta$ and standard weights. Combining
\eqref{Eqn:ADDScaled} with the decay of $\tilde{P}$,  the uniform bounds for $\tilde{Z}_\delta$, as well as the affine bound for $\tilde{G}_\delta$ from \eqref{Eqn:AffBounds} we obtain
\begin{align*}
\abs{\tilde{S}_\delta\at{\tilde{x}}}+
\abs{
\tilde{S}^\prime_\delta\at{\tilde{x}}}\quad\xrightarrow{\;\;\tilde{x}\to+\infty\;\;}\quad 0
\end{align*}
as well as
\begin{align*}
\abs{\tilde{S}_\delta^{\prime\prime}\at{\tilde{x}}}\leq C\frac{\exp\at{-\delta a\tilde{x}}\bat{\tilde{x}+\delta^{-1}}}{\bat{\tilde{x}-\delta^{-1}}^{m+2}}
\qquad \text{for}\quad \tilde{x}\geq \tfrac{3}{2}\delta^{-1}
\end{align*}
and hence
\begin{align}
\label{Eqn:AuxEstimates}
\abs{\tilde{S}_\delta\at{\tfrac{3}{2}\delta^{-1}}}=\DO{\delta^{m-1}}\,,\qquad
\abs{\tilde{S}^\prime_\delta\at{\tfrac{3}{2}\delta^{-1}}}=\DO{\delta^{m}}
\end{align}
after integration over $\tilde{x}\geq \tfrac{3}{2}\delta^{-1}$. Using the pointwise estimates and the decay of $\tilde{P}$ we further verify
\begin{align}
\label{AsympODE1}
\tilde{S}_\delta^{\prime\prime}\at{\tilde{x}}=-2\tilde{P}\at{\tilde{x}}\tilde{S}_\delta
\at{\tilde{x}} +\tilde{E}_{0,\delta}\at{\tilde{x}}\qquad \text{for}\quad \tilde{x}\in \tilde{I}_\delta:=\left[-\tfrac{1}{2}\delta^{-1},\,+\tfrac{1}{2}\delta^{-1}\right]\,
\end{align}
as well as
\begin{align}
\label{AsympODE2}
\tilde{S}_\delta^{\prime\prime}\at{\tilde{x}+\delta^{-1}}=\tilde{P}\at{\tilde{x}}\tilde{S}_\delta
\at{\tilde{x}} +\tilde{E}_{+,\delta}\at{\tilde{x}}\qquad \text{for}\quad \tilde{x}\in \tilde{I}_\delta,
\end{align}
where the error terms are pointwise of order $\DO{\delta^m}$ and satisfy
\begin{align}
\label{eqn:errorEst}
\int_{\tilde{I}_\delta} \abs{\tilde{x}}^i\bat{\abs{\tilde{E}_{0,\delta}\at{\tilde{x}}}+\abs{\tilde{E}_{+,\delta}\at{\tilde{x}}}}\dint\tilde{x}=\DO{\delta^{m-i}}\qquad \text{for}\quad i\in\{0,1\}\,.
\end{align}
In other words, we can replace the nonlocal equation \eqref{Eqn:ADDScaled} on the interval $\tilde{I}_\delta$ by an asymptotic ODE since both the advance and the delay terms on the right hand side are small, while on the shifted interval $\tilde{I}_\delta+\delta^{-1}$ the main contribution stems from the delay term. (On $\tilde{I}_\delta-\delta^{-1}$, the advance term is the most relevant one.)
\bigpar
\begin{figure}[t!] %
\centering{ %
\includegraphics[width=0.5\textwidth]{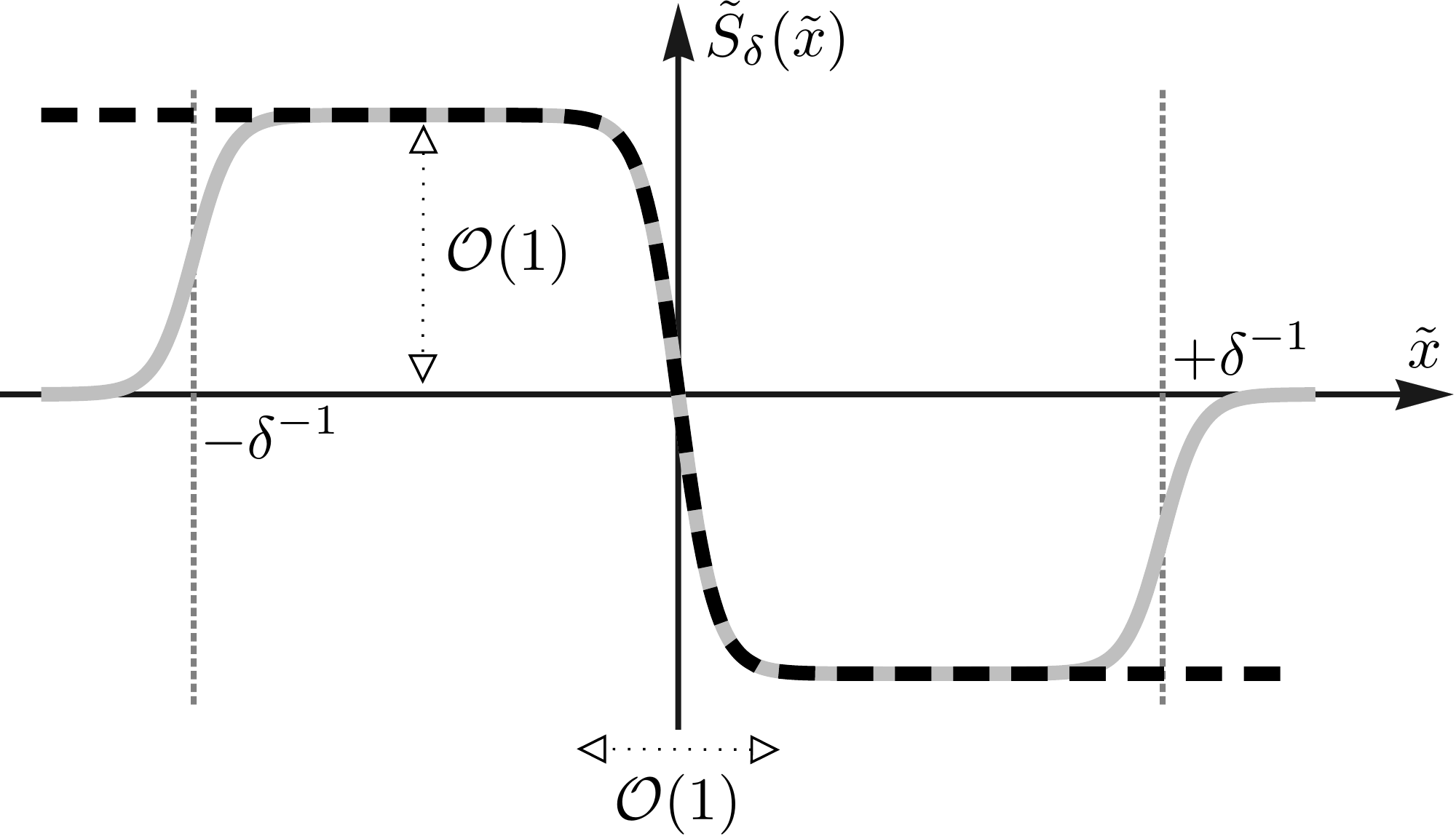} %
} %
\caption{Cartoon of the unique recaled eigenfunction $\tilde{S}_\delta$ (gray, solid) and its pointwise limit $\tilde{S}_0$ (black, dashed) with respect to the scaled phase variable $\tilde{x}$.%
} %
\label{FigEigen} %
\end{figure} %
{\bf Uniqueness of accumulation points:}   The linear ODE \eqref{AsympODE1} and the error estimates \eqref{eqn:errorEst} imply
\begin{align}
\label{AsympLaw1}
\tilde{S}_\delta\at{\tilde{x}}=c_{e,\delta}\tilde{T}_{e}
\at{\tilde{x}}+c_{o,\delta}\tilde{T}_o\at{\tilde{x}}+\DO{\delta^{m-1}}\qquad \text{for all}\quad \tilde{x}\in\tilde{I}_\delta
\end{align}
with $\tilde{T}_e$ and $\tilde{T}_o$ as in Proposition \ref{Prop.Coeff}. The constants $c_{e,\delta}$ and $c_{0,\delta}$ are uniquely determined by $\tilde{S}_\delta\at{0}$ and $\tilde{S}_\delta^\prime\at{0}$,  and satisfy
\begin{align*}
\abs{c_{e,\delta}}+\abs{c_{o,\delta}}\quad \xrightarrow{\;\;\delta\to0\;\;}\quad c\neq0
\end{align*}
due to the locally uniform convergence of $\tilde{S}_\delta$ and $\tilde{S}_\delta^\prime$ and the nontriviality of the limit. We further employ the identity
\begin{align*}
\tilde{S}_\delta\at{\tilde{x}}=
\tilde{S}_\delta\at{\tfrac32\delta^{-1}} +
\tilde{S}^\prime_\delta\bat{\tfrac32\delta^{-1}}\at{\tilde{x}-\tfrac32\delta^{-1}} +
\int\limits_{\tilde{x}}^{\tfrac32\delta^{-1}}
\tilde{S}_\delta^{\prime\prime}\at{\tilde{y}}\bat{\tilde{y}-\tilde{x}}\dint
{\tilde{y}}
\end{align*}
along with \eqref{Eqn:AuxEstimates} and the asymptotic differential relations \eqref{AsympODE1}+\eqref{AsympODE2} to get
\begin{align*}
\tilde{S}_\delta\at{\tfrac12\delta^{-1}}&=\int\limits_{\tfrac12\delta^{-1}}^{\tfrac32\delta^{-1}}
\tilde{S}_\delta^{\prime\prime}\at{\tilde{y}}\bat{\tilde{y}-\tfrac12\delta^{-1}}\dint
{\tilde{y}}+\DO{\delta^{m-1}}
\\&=
\int\limits_{-\tfrac12\delta^{-1}}^{+\tfrac12\delta^{-1}}
\tilde{S}_\delta^{\prime\prime}\at{\tilde{y}+\delta^{-1}}\bat{\tilde{y}+\tfrac12\delta^{-1}}\dint
{\tilde{y}}+\DO{\delta^{m-1}}
\\&=
\int\limits_{-\tfrac12\delta^{-1}}^{+\tfrac12\delta^{-1}}
\Bat{-\tfrac12 c_{e,\delta}\tilde{T}_e^{\prime\prime}\at{\tilde{y}}-\tfrac12 c_{o,\delta}\tilde{T}_o^{\prime\prime}\at{\tilde{y}}}\bat{\tilde{y}+\tfrac12\delta^{-1}}\dint
{\tilde{y}}+\DO{\delta^{m-1}}
\\&=
-\frac{c_{e,\delta}}{4\delta}\int\limits_{-\tfrac12\delta^{-1}}^{+\tfrac12\delta^{-1}}
\tilde{T}_e^{\prime\prime}\at{\tilde{y}}\dint{\tilde{y}}
-
\frac{c_{o,\delta}}{2}\int\limits_{-\tfrac12\delta^{-1}}^{+\tfrac12\delta^{-1}}
\tilde{T}_o^{\prime\prime}\at{\tilde{y}}\tilde{y}\dint
{\tilde{y}}+
\DO{\delta^{m-1}}
\\&=
-\frac{c_{e,\delta}}{2\delta}\tilde{T}_e^\prime\at{\tfrac12\delta^{-1}}
+c_{o,\delta}\tilde{T}_o\at{\tfrac12\delta^{-1}}+\DO{\delta^{m-1}},
\end{align*}
where we also used the parity of $\tilde{T}_e$ and $\tilde{T}_o$ as well as
\begin{align*}
\frac{\dint}{\dint \tilde{y}}\Bat{\tilde{T}_o^\prime\at{\tilde{y}}\tilde{y}-\tilde{T}_o\at{\tilde{y}}}=\tilde{T}_o^{\prime\prime}\at{\tilde{y}}\tilde{y},\qquad \tilde{T}^\prime_o\at{\tfrac12\delta^{-1}}=\DO{\delta^m}.
\end{align*}
Equating this with \eqref{AsympLaw1} evaluated at $\tilde{x}=\tfrac12\delta^{-1}$ we arrive at
\begin{align*}
\frac{c_{e,\delta}}{2\delta}\Bat{\tilde{T}_e^\prime\at{\tfrac12\delta^{-1}}+2\delta  \tilde{T}_e\at{\tfrac12\delta^{-1}}}=\DO{\delta^{m-1}}\,.
\end{align*}
On the other hand, the properties of $\tilde{T}_e$ -- see again  Proposition \ref{Prop.Coeff} -- provide
\begin{align*}
\tilde{T}_e^\prime\at{\tfrac12\delta^{-1}}=2\delta  \tilde{T}_e\at{\tfrac12\delta^{-1}}+\DO{\delta},\qquad
\tilde{T}_e^\prime\at{\tfrac12\delta^{-1}}=1+\DO{\delta^m}
\end{align*}
and we conclude that
\begin{align}
\label{AsympLaw2}
c_{e,\delta}=\DO{\delta^m}\,,\qquad c_{o,\delta}=c_{o,0}+\DO{\delta^m},
\end{align}
where $c_{o,0}\neq0$ is uniquely determined by the normalisation condition \eqref{eq:Gnormal}.

\bigpar
{\bf Conclusion:}  
In \eqref{AsympLaw1} and \eqref{AsympLaw2} have shown that $\tilde{S}_\delta$ can be approximated with high accuracy by a certain multiple of the odd solution to the linear ODE \eqref{eqn:linODE}, see Figure \ref{FigEigen} for an illustration, and Lemma \ref{lem:nabl} implies the corresponding asymptotic uniqueness for $\tilde{W}_\delta$. In particular, this result applies to the rescaled kernel functions $\pair{\tilde{S}_{1,\delta}}{\tilde{W}_{1,\delta}}$
from \eqref{eq:EV} as well as to the rescaling of any other solution to $L_\delta\pair{S_\delta}{W_\delta}=0$. If Proposi\-tion~\ref{prop:ker} was false, we would find another solution $\pair{\tilde{S}_\delta}{\tilde{W}_\delta}$ in the orthogonal $\fspaceL_a^2$-complement of $\pair{\tilde{S}_{1,\delta}}{\tilde{W}_{1,\delta}}$ and hence a contradiction.  

\subsection{Local uniqueness and differentiability of  travelling waves}

  We finally sketch the proof of Theorem \ref{ass:basic}.   We look for   solutions of the nonlinear travelling wave equation   \eqref{eq:F} in $\fspaceL^2_a$ and   thanks to    Lemma \ref{lem:nabl} we can recover $V$ for given $R$. So it   suffices to seek   solutions to the second order
nonlinear equation
\begin{equation}\label{eq:F2}
     \mathcal{F}_2(R,\si_\delta)=\frac{\partial^2}{(\partial x )^2}R(.)-  \frac 1 {\si_\delta} \Delta_1\Phi'\bat{R(.)} =0.\end{equation}
We note that $\mathcal{F}_2(.,\si_\delta): \fspaceH^2_a \to \fspaceL^2_a$ maps even to even and odd to odd functions and   aim to apply   the implicit function theorem to \eqref{eq:F2}. The solutions given in Proposition \ref{Ass:Waves} provide
a point with $\mathcal{F}_2(R_\delta,\si_\delta)=0$ and
the kernel of $L_\delta$ is   spanned by a single odd profile, see  Proposition \ref{prop:ker}.   By Lemma \ref{lem:nabl} b), $0$ is not in the essential \mbox{spectrum} and this implies that the second order version of $L_\delta$ as corresponding to \eqref{eq:2nd} is invertible on   the space of   even functions. Hence $D_1\mathcal{F}_2(R,\si_\delta)$ is invertible on even functions if $0<\delta<\delta_0$. Consequently, the uniqueness part of Theorem \ref{ass:basic} is a consequence of the implicit function theorem. Furthermore, $R$ depends smoothly on the wave speed parameter $\sigma$ as long as $\delta$ is small enough such that $\sigma$ will be large. This completes the proof
 of Theorem \ref{ass:basic}.

\section{Discussion}
\label{sect:dis}

The control of the kernel of $L_\delta$ is an important step to study the dynamical \mbox{stability} of the waves given in Proposition \ref{Ass:Waves}. Following \cite{FP3} it is enough to study eigenfunctions to eigenvalues with non-negative real part of the linearisation of \eqref{Eqn:FPU} around the travelling waves. The current analysis helps with this as one needs to show that neutral modes are
just those
 $2\times 2$ Jordan blocks that are created due to the symmetry of  the system. The symmetry solutions are $\bpair{S_{1,\delta}}{W_{1,\delta}}$ from \eqref{eq:EV} and
\begin{align*}
 \bpair{S_{2,\delta}}{W_{2,\delta}}:=\pair{\frac{d R_\delta}{d\delta}}{\frac{d V_\delta}{d\delta}}
\end{align*}
and satisfy the Jordan relations
\begin{align*}
L_\delta   \bpair{S_{1,\delta}}{W_{1,\delta}}=0 ,\qquad
L_\delta  \bpair{S_{2,\delta}}{W_{2,\delta}}=-\frac{d\sqrt{\si_\delta}}{d\delta} \bpair{S_{1,\delta}}{W_{1,\delta}}.
\end{align*}
This programme will be carried out in forth-coming paper for the high-energy limit using a similar combination of techniques of detailed asymptotic analysis and the structure of the underlying equations. Most of the analysis will hold for other \mbox{potentials} than \eqref{Eqn:Pot} as long as one can guarantee certain  non-degeneracy conditions for the energy of   a solitary wave.   In particular, one needs to show   that
\[\frac{dH(R_\delta,V_\delta)}{d\delta} \neq 0\quad \mbox{ and }\quad \frac{d\sigma_\delta}{d\delta} \neq 0\]
  holds in the high-energy limit, where $H$ can be computed using the FPU energy. 

Unimodal solitary travelling waves exist following \cite{FW96} for all supersonic wave speeds. They are locally unique and dynamically stable in KdV regime close to the sound speed by \cite{FP99,FP02,FP3,FP4}. For the high-energy, i.e. high velocity limit,  we have established local uniqueness in this note, whereas  results on dynamical stability are forthcoming. We conjecture that for most potentials the whole family of unimodal solitary travelling waves are indeed unique and stable, but new methods need to be developed to understand the linearisation of \eqref{Eqn:FPU} around the travelling waves for moderate speeds.

\section*{Acknowledgements}
The authors are grateful for the support by the \emph{Deutsche Forschungsgemeinschaft} (DFG individual grant HE 6853/2-1) and the \emph{London Mathematical Society} (LMS Scheme 4 Grant, Ref~41326).
KM would like to thank for the hospitality during a sabbatical stay at the University of M\"unster.
%
\bibliographystyle{alpha}
\newcommand{\etalchar}[1]{$^{#1}$}

\end{document}